\documentclass{amsart}

\usepackage{amsmath}
\usepackage{amssymb}
\usepackage{xcolor}
\usepackage{url}

\overfullrule=1pt

\newtheorem{thm}{Theorem}[section]

\newtheorem{lem}[thm]{Lemma}

\allowdisplaybreaks

\theoremstyle{definition}

\theoremstyle{remark}
\newtheorem{rem}{Remark}

%%%%%%%%%%%%%%%%%%%%%%%%%%%%%%%%%%%%%%%%%%%%%%%%%%
\title[The convergence of discrete Fourier-Jacobi series]{The convergence of \\ discrete Fourier-Jacobi series}
%%%%%%%%%%%%%%%%%%%%%%%%%%%%%%%%%%%%%%%%%%%%%%%%%%

\author[A. Arenas]{Alberto Arenas}
\address{Departamento de Matem\'aticas y Computaci\'on,
Universidad de La Rioja, Complejo Cient\'{\i}fico-Tecnol\'ogico,
Calle Madre de Dios 53, 26006 Logro\~no, Spain}
\email{alberto.arenas@unirioja.es}

\author[\'O. Ciaurri]{\'Oscar Ciaurri}
\address{Departamento de Matem\'aticas y Computaci\'on,
Universidad de La Rioja, Complejo Cient\'{\i}fico-Tecnol\'ogico,
Calle MAdre de Dios 53, 26006 Logro\~no, Spain}
\email{oscar.ciaurri@unirioja.es}

\author[E. Labarga]{Edgar Labarga}
\address{Departamento de Matem\'aticas y Computaci\'on,
Universidad de La Rioja, Complejo Cient\'{\i}fico-Tecnol\'ogico,
Calle Madre de Dios 53, 26006 Logro\~no, Spain}
\email{edgar.labarga@unirioja.es}

\keywords{Discrete harmonic analysis, $\ell^p(\mathbb{N})$-convergence, Jacobi polynomials, weighted norm inequalities}
\subjclass[2010]{Primary: 42C10.}
\thanks{The first-named author was supported by a predoctoral research grant of the Government of Comunidad Aut\'{o}noma de La Rioja. The second-named author was supported by grant PGC2018-096504-B-C32 from Spanish Government. The third-named author was supported by a predoctoral research grant of the University of La Rioja.}

%%%%%%%%%%%%%%%%%%%%%%%%%%%%%%%%%%%%%%%%%%%%%%%%%%%

%%%%%%%%%%%%%%%%%%%%%%%%%%%%%%%%%%%%%%%%%%%%%%%%%%%
\begin{document}
%%%%%%%%%%%%%%%%%%%%%%%%%%%%%%%%%%%%%%%%%%%%%%%%%%%

%%%%%%%%%%%%%%%%%%%%%%%%%%%%%%%%%%%%%%%%%%%%%%%%%%%%%%
\begin{abstract}
The discrete counterpart of the problem related to the convergence of the Fourier-Jacobi series is studied.
To this end, given a sequence, we consider the analogue of the partial sum operator related to Jacobi polynomials and characterize its convergence in the $\ell^p(\mathbb{N})$-norm.
\end{abstract}
%%%%%%%%%%%%%%%%%%%%%%%%%%%%%%%%%%%%%%%%%%%%%%%%%%%%%%

\maketitle

%%%%%%%%%%%%%%%%%%%%%%%%%%%%%%%%%%%%%%%%%%%%%%%%%%%%%%
\section{Introduction}
%%%%%%%%%%%%%%%%%%%%%%%%%%%%%%%%%%%%%%%%%%%%%%%%%%%%%%
By using Rodrigues' formula (see \cite[p.~67, eq.~(4.3.1)]{Szego}), the Jacobi polynomials $P^{(\alpha,\beta)}_n(x)$, $n\ge 0$, are defined as
\[
(1-x)^{\alpha}(1+x)^{\beta}P_n^{(\alpha,\beta)}(x)=\frac{(-1)^n}{2^n \, n!}\frac{d^n}{dx^n}\left((1-x)^{\alpha+n}(1+x)^{\beta+n}\right).
\]
For $\alpha,\beta>-1$, they are orthogonal on the interval $[-1,1]$ with respect to the measure
\[
d\mu_{\alpha,\beta}(x)=(1-x)^\alpha(1+x)^{\beta}\,dx.
\]
The family $\{p_n^{(\alpha,\beta)}(x)\}_{n\ge 0}$, given by $p_n^{(\alpha,\beta)}(x)=w_n^{(\alpha,\beta)}P_n^{(\alpha,\beta)}(x)$, where
\begin{equation*}
\begin{aligned}
w_n^{(\alpha,\beta)}& = \frac{1}{\|P_n^{(\alpha,\beta)}\|_{L^2((-1,1),d\mu_{\alpha,\beta})}} \\&= \sqrt{\frac{(2n+\alpha+\beta+1)\, n!\,\Gamma(n+\alpha+\beta+1)}{2^{\alpha+\beta+1}\Gamma(n+\alpha+1)\,\Gamma(n+\beta+1)}},\quad n\geq1,
\end{aligned}
\end{equation*}
and
\[
w_{0}^{(\alpha,\beta)} = \frac{1}{\|P_{0}^{(\alpha,\beta)}\|_{L^2((-1,1),d\mu_{\alpha,\beta})}} = \sqrt{\frac{\Gamma(\alpha+\beta+2)}{2^{\alpha+\beta+1}\Gamma(\alpha+1)\Gamma(\beta+1)}},
\]
is a complete orthonormal system in the space $L^2([-1,1],d\mu_{\alpha,\beta})$. Given a function $f\in L^2([-1,1],d\mu_{\alpha,\beta})$  its Fourier-Jacobi coefficients are defined by
\[
c_n^{(\alpha,\beta)}(f)=\int_{-1}^{1}f(x)p_n^{(\alpha,\beta)}(x)\, d\mu_{\alpha,\beta}(x).
\]
The application
\[
\begin{matrix}
L^2([-1,1],d\mu_{\alpha,\beta})&\longrightarrow &\ell^2(\mathbb{N})\\
f & \longmapsto & \{c_n^{(\alpha,\beta)}(f)\}_{n\ge 0}
\end{matrix}
\]
is an isometry and Parseval's identity
\[
\|f\|_{L^2([-1,1],d\mu_{\alpha,\beta})}=\|c_n^{(\alpha,\beta)}(f)\|_{\ell^2(\mathbb{N})}
\]
holds. For functions $f\in L^p([-1,1],d\mu_{\alpha,\beta})$, we define the $n$-th partial sum operator by
\[
S_n^{(\alpha,\beta)}f(x)=\sum_{k=0}^{n}c_k^{(\alpha,\beta)}(f)p_k^{(\alpha,\beta)}(x).
\]
It is well known (see \cite{Pollard} and \cite{New-Rudin}) that the mean convergence of $S_{n}^{(\alpha,\beta)}$, i.e.,
\begin{equation}
\label{eq:Sn-Jac}
S_n^{(\alpha,\beta)}f\longrightarrow f \qquad \text{in $L^p([-1,1],d\mu_{\alpha,\beta})$},
\end{equation}
holds for $\alpha,\beta\ge -1/2$ if and only if
\[
\max\left\{\frac{4(\alpha+1)}{2\alpha+3},\frac{4(\beta+1)}{2\beta+3}\right\}<p<\min\left\{\frac{4(\alpha+1)}{2\alpha+1},\frac{4(\beta+1)}{2\beta+1}\right\}.
\]
This partial sum operator has been extensively analysed. In \cite{Muck-Jac} some weighted inequalities were studied for $\alpha,\beta>-1$. The weak behaviour of $S_n^{(\alpha,\beta)}$ (weak $(p,p)$-type and restricted weak $(p,p)$-type inequalities) was treated in \cite{Chanillo} for the case $\alpha=\beta=0$ and in \cite{GPV-1} for the general case. Weighted weak type inequalities were analysed in \cite{GPV-2}.

In this paper, we focus on the analysis of discrete Fourier-Jacobi expansions. More precisely, given an appropriate sequence $\{f(n)\}_{n\ge 0}$, its $(\alpha,\beta)$-transform $\mathcal{F}_{\alpha,\beta}$ is given by the identity
\[
\mathcal{F}_{\alpha,\beta}f(x)=\sum_{k=0}^{\infty}f(k)p_k^{(\alpha,\beta)}(x)
\]
and its inverse by
\[
\mathcal{F}^{-1}_{\alpha,\beta}F(n)=c_n^{(\alpha,\beta)}(F).
\]
We are interested in recovering the given sequence by means of the multiplier of an interval for $\mathcal{F}_{\alpha,\beta}$. In a more concrete way, we define the multiplier of an interval $[a,b]\subset [-1,1]$, denoted by $T_{[a,b]}$ and simply by $\mathcal{T}_r$ when $[a,b]=[-r,r]$, with $0<r<1$, by the relation
\[
T_{[a,b]}f=\mathcal{F}^{-1}_{\alpha,\beta}(\chi_{[a,b]}\mathcal{F}_{\alpha,\beta}f).
\]
where $\chi_{[a,b]}$ is the characteristic function of the interval $[a,b]$. We want to study the conditions under
\begin{equation}
\label{eq:conv}
\lim_{r\to 1^{-}}\|\mathcal{T}_{r}f-f\|_{\ell^p(\mathbb{N})}=0.
\end{equation}
This problem is the discrete counterpart of \eqref{eq:Sn-Jac} and it belongs to the study of the discrete harmonic analysis for Jacobi series developed in \cite{ACL-JacI, ACL-JacII, ACL-Trans} by the authors. In those papers, the starting point is a discrete Laplacian defined by the three-term recurrence relation for the Jacobi polynomials. Recently, some classical operators in harmonic analysis have been treated in other discrete settings. For example, in \cite{Ciau-et-al} a complete study of the operators associated with the discrete Laplacian
\[
\Delta_d u(u)=u(n+1)-2u(n)+u(n-1), \qquad n\in \mathbb{Z},
\]
was carried out. On its behalf, the same analysis was done in \cite{Bet-et-al}  for a discrete Laplacian defined in terms of the three-term recurrence relation for the ultraspherical polynomials.

In order to study \eqref{eq:conv}, we give a complete characterization of the uniform boundedness of the operator $T_{[a,b]}$ on the spaces $\ell^p(\mathbb{N})$. This result will be a consequence of a more general one about the boundedness with discrete weights of $T_{[a,b]}$. Therefore, the convergence in \eqref{eq:conv} will follow from this characterization.

To state our result containing the weighted inequalities for the operator $T_{[a,b]}$, we need some preliminaries. A weight on $\mathbb{N}=\{0,1,2,\dots\}$ will be a strictly positive sequence $w=\{w(n)\}_{n\ge 0}$. We consider the weighted $\ell^{p}$-spaces
\[
\ell^p(\mathbb{N},w)=\left\{f=\{f(n)\}_{n\ge 0}: \|f\|_{\ell^{p}(\mathbb{N},w)}:=\Bigg(\sum_{m=0}^{\infty}|f(m)|^p w(m)\Bigg)^{1/p}<\infty\right\},
\]
$1\le p<\infty$, and the weak weighted $\ell^{1}$-space
\[
\ell^{1,\infty}(\mathbb{N},w)=\left\{f=\{f(n)\}_{n\ge 0}: \|f\|_{\ell^{1,\infty}(\mathbb{N},w)}:=\sup_{t>0}t\sum_{\{m\in \mathbb{N}: |f(m)|>t\}} w(m)<\infty\right\},
\]
and we simply write $\ell^p(\mathbb{N})$ and $\ell^{1,\infty}(\mathbb{N})$ when $w(n)=1$ for all $n\in \mathbb{N}$.

Furthermore, we say that a weight $w(n)$ belongs to the discrete Muckenhoupt $A_p(\mathbb{N})$ (see, for instance, \cite{HMW}) if
\[
[w]_{A_p(\mathbb{N}):=}\sup_{\begin{smallmatrix} 0\le n \le m \\ n,m\in \mathbb{N} \end{smallmatrix}} \frac{1}{(m-n+1)^p}\Bigg(\sum_{k=n}^mw(k)\Bigg)\Bigg(\sum_{k=n}^mw(k)^{-1/(p-1)}\Bigg)^{p-1} <\infty,
\]
for $1<p<\infty$,
\[
[w]_{A_1(\mathbb{N})}:=\sup_{\begin{smallmatrix} 0\le n \le m \\ n,m\in \mathbb{N} \end{smallmatrix}} \frac{1}{m-n+1}\Bigg(\sum_{k=n}^mw(k)\Bigg)\max_{n\le k \le m}w(k)^{-1} <\infty,
\]
for $p=1$. The value $[w]_{A_p(\mathbb{N})}$ is called the $A_p(\mathbb{N})$ constant of $w$.

Now we are in position to state the following result.
\begin{thm}
\label{thm:main}
Let $\alpha,\beta \ge -1/2$, $[a,b]\subset [-1,1]$, $1\le p<\infty$, and $w\in A_p(\mathbb{N})$. Then, 
\[
T_{[a,b]}f(n)=\sum_{m=0}^{\infty}f(m)K_{[a,b]}(m,n), \qquad f\in \ell^p(\mathbb{N},w),
\]
where
\[
K_{[a,b]}(m,n)=\int_{a}^{b}p_m^{(\alpha,\beta)}(x)p_n^{(\alpha,\beta)}(x)\, d\mu_{\alpha,\beta}(x).
\]
Moreover, for $1<p<\infty$
\begin{equation}
\label{eq:main-estimate}
\|T_{[a,b]}f\|_{\ell^p(\mathbb{N},w)}\le C \|f\|_{\ell^p(\mathbb{N},w)},
\end{equation}
and for $p=1$
\begin{equation}
\label{eq:main-estimate-1}
\|T_{[a,b]}f\|_{\ell^{1,\infty}(\mathbb{N},w)}\le C \|f\|_{\ell^1(\mathbb{N},w)},
\end{equation}
where $C$ is a constant independent of $f$ and $[a,b]$ in both inequalities.
\end{thm}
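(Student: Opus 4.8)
The plan is to realize $T_{[a,b]}$ as an integral-type operator on $\mathbb{N}$ whose kernel is, up to a harmless modulation, a discrete Hilbert transform, and then to invoke the weighted theory for such operators. First I would unwind the definition $T_{[a,b]}=\mathcal{F}_{\alpha,\beta}^{-1}(\chi_{[a,b]}\mathcal{F}_{\alpha,\beta}\cdot)$: formally
\[
T_{[a,b]}f(n)=c_n^{(\alpha,\beta)}\Big(\chi_{[a,b]}\sum_{m\ge0}f(m)p_m^{(\alpha,\beta)}\Big)=\sum_{m\ge0}f(m)\int_a^b p_m^{(\alpha,\beta)}p_n^{(\alpha,\beta)}\,d\mu_{\alpha,\beta},
\]
which gives the asserted kernel $K_{[a,b]}(m,n)$, the interchange of sum and integral being justified a posteriori by the size estimates below together with $f\in\ell^p(\mathbb{N},w)$. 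Crucially, $K_{[a,b]}(m,n)=\langle \chi_{[a,b]}p_m^{(\alpha,\beta)},p_n^{(\alpha,\beta)}\rangle_{L^2(d\mu_{\alpha,\beta})}$ is exactly the matrix of multiplication by $\chi_{[a,b]}$ in the orthonormal basis $\{p_k^{(\alpha,\beta)}\}$; since that multiplication operator has $L^2$-norm at most $1$, Parseval's identity gives $\|T_{[a,b]}\|_{\ell^2(\mathbb{N})\to\ell^2(\mathbb{N})}\le1$, uniformly in $[a,b]$. This unweighted $\ell^2$ bound is the input the Calderón--Zygmund machinery requires.

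The next step is an exact off-diagonal formula. Writing $\lambda_k=k(k+\alpha+\beta+1)$ and using that $p_k^{(\alpha,\beta)}$ solves the Jacobi equation in Sturm--Liouville form, Green's identity yields, for $m\ne n$,
\[
K_{[a,b]}(m,n)=\frac{1}{\lambda_n-\lambda_m}\Big[(1-x)^{\alpha+1}(1+x)^{\beta+1}\big((p_m^{(\alpha,\beta)})'(x)\,p_n^{(\alpha,\beta)}(x)-p_m^{(\alpha,\beta)}(x)\,(p_n^{(\alpha,\beta)})'(x)\big)\Big]_{x=a}^{x=b},
\]
while on the diagonal $0\le K_{[a,b]}(n,n)=\int_a^b (p_n^{(\alpha,\beta)})^2\,d\mu_{\alpha,\beta}\le1$. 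Since $\lambda_n-\lambda_m=(n-m)(n+m+\alpha+\beta+1)$, the problem reduces to pointwise control of $p_k^{(\alpha,\beta)}$ and its derivative at the fixed points $a,b$.

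Here the Jacobi asymptotics enter. On compact subsets of $(-1,1)$ one has the trigonometric asymptotics $p_k^{(\alpha,\beta)}(\cos\theta)=O(1)$ and $(p_k^{(\alpha,\beta)})'(\cos\theta)=O(k)$, with explicit oscillatory leading terms; near $x=\pm1$ the Hilb/Bessel-type asymptotics must be used instead. Feeding these into the formula above and simplifying the resulting products of cosines, I expect the leading contribution of each endpoint $\phi=\arccos a,\arccos b$ to be of the form $C(\phi)\,\dfrac{\sin((m-n)\phi)}{m-n}$ plus terms that are $O\big(1/(m+n)\big)$, so that, uniformly in $a,b$ and $m\ne n$,
\[
|K_{[a,b]}(m,n)|\le\frac{C}{|m-n|},
\]
together with the companion Hörmander-type smoothness estimate controlling $K_{[a,b]}(m,n)-K_{[a,b]}(m,n')$. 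The point for uniformity is that the amplitude $C(\phi)$ stays bounded as $\phi\to0,\pi$: the factor $(1-x)^{\alpha+1}(1+x)^{\beta+1}$ vanishing at $\pm1$ exactly balances the growth of the asymptotic amplitudes there, so all implicit constants are independent of $[a,b]$.

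Finally, I would conclude by recognizing the leading part of $T_{[a,b]}$ as a modulated discrete Hilbert transform: writing $\sin((m-n)\phi)/(m-n)$ through $e^{\pm i(m-n)\phi}/(m-n)$ and absorbing the unimodular factors $e^{\pm im\phi}$, $e^{\mp in\phi}$ (which preserve every weighted norm $\|\cdot\|_{\ell^p(\mathbb{N},w)}$), the main term is controlled by the weighted boundedness of the discrete Hilbert transform for $w\in A_p(\mathbb{N})$, whose operator norm depends only on $[w]_{A_p(\mathbb{N})}$, while the remainder is handled by its size estimate. Equivalently, one may appeal directly to Calderón--Zygmund theory on $\mathbb{N}$ with the counting measure: the $\ell^2$-bound together with the size and smoothness estimates yields \eqref{eq:main-estimate} for $1<p<\infty$ and the weak-type bound \eqref{eq:main-estimate-1} for $p=1$, with constants depending only on $[w]_{A_p(\mathbb{N})}$ and hence independent of $f$ and $[a,b]$. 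The main obstacle, and the heart of the argument, is obtaining the size and smoothness estimates with constants uniform in $[a,b]$: this forces one to patch the bulk trigonometric asymptotics together with the endpoint Bessel-type asymptotics and to verify the delicate cancellation between the Sturm--Liouville weight and the oscillatory amplitudes near $x=\pm1$.
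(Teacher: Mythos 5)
Your skeleton overlaps substantially with the paper's: the kernel representation, the exact off-diagonal formula via the Sturm--Liouville/Green identity, and the ultimate appeal to the weighted discrete Hilbert transform are all there. The execution of the main estimate, however, diverges in a way that leaves a genuine gap. The paper never touches oscillatory asymptotics at this stage. Instead, it splits $1/(\lambda_n-\lambda_m)$ by the partial-fraction identities $\tfrac{1}{\lambda_n-\lambda_m}=\tfrac{1}{2m+\alpha+\beta+1}\bigl(\tfrac{1}{n-m}-\tfrac{1}{n+m+\alpha+\beta+1}\bigr)=\tfrac{1}{2n+\alpha+\beta+1}\bigl(\tfrac{1}{n-m}+\tfrac{1}{n+m+\alpha+\beta+1}\bigr)$ and thereby factors the kernel \emph{exactly} as a sum of four terms of the form $(\text{bounded sequence in }n)\times(\text{Hilbert or Hardy-type kernel})\times(\text{bounded sequence in }m)$, the bounded sequences being $r_b(n)=(1-b)^{\alpha/2+1/4}(1+b)^{\beta/2+1/4}p_n^{(\alpha,\beta)}(b)$ and $R_b(n)=(1-b)^{\alpha/2+3/4}(1+b)^{\beta/2+3/4}(p_n^{(\alpha,\beta)})'(b)/(2n+\alpha+\beta+1)$, whose uniform boundedness is exactly Muckenhoupt's pointwise estimate for $\alpha,\beta\ge-1/2$ plus the derivative identity. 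The precise cancellation near $x=\pm1$ that you identify as ``the heart of the argument'' is thus handled algebraically, with no error terms and no patching of bulk and endpoint asymptotic regimes; the $A_p$-weighted bounds for $H$ and for the Hardy operator and its adjoint then finish the proof, including the weak $(1,1)$ case.

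Two concrete problems with your route as written. First, the claim that one may ``equivalently'' appeal to Calder\'on--Zygmund theory from the size and H\"ormander estimates is not correct with constants uniform in $[a,b]$: for the leading kernel $\sin((m-n)\phi)/(m-n)$ with $\phi$ bounded away from $0$ and $\pi$, the differences $|K(m,n)-K(m,n+1)|$ are only $O(1/|m-n|)$, so the H\"ormander sum diverges; the operator is bounded only because it is a \emph{modulation} of the Hilbert transform, and that structure is destroyed once you pass to absolute values. So the modulation/factorization route is not an optional alternative but the only one available, and once you commit to it you should extract the unimodular factors exactly (as the paper does) rather than through asymptotics. Second, your asymptotic expansion carries error terms that you assert are $O(1/(m+n))$ uniformly in $a,b$; in the regimes $1-b\lesssim (n+1)^{-2}$ or $1+a\lesssim(n+1)^{-2}$ the trigonometric asymptotics fail and the Bessel-regime analysis needed to recover uniformity is left entirely open, whereas it is precisely there that the theorem's uniformity in $[a,b]$ is at stake. (The remaining issues --- the $O(1/(n+m))$ piece at $p=1$, which needs the adjoint-Hardy argument rather than a bare size estimate, and the extension of the kernel formula from $\ell^2\cap\ell^p(\mathbb{N},w)$ to all of $\ell^p(\mathbb{N},w)$ --- are minor but should not be waved through as ``a posteriori.'')
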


As a consequence of the previous theorem, we can characterize the uniform boundedness of $T_{[a,b]}$ on the spaces $\ell^p(\mathbb{N})$.

\begin{thm}
\label{thm:main-nopesos}
Let $\alpha,\beta \ge -1/2$ and $1\le p<\infty$. Then,
\begin{equation}
\label{eq:main-estimate-nopesos}
\|T_{[a,b]}f\|_{\ell^p(\mathbb{N})}\le C \|f\|_{\ell^p(\mathbb{N})},
\end{equation}
where $C$ is a constant independent of $f$ and $[a,b]\subset [-1,1]$, if and only if $1<p<\infty$.
\end{thm}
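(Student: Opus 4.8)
The plan is to deduce both implications from Theorem~\ref{thm:main}. For the sufficiency, assume $1<p<\infty$ and apply Theorem~\ref{thm:main} with the constant weight $w\equiv 1$. Since
\[
[1]_{A_p(\mathbb{N})}=\sup_{0\le n\le m}\frac{1}{(m-n+1)^p}(m-n+1)(m-n+1)^{p-1}=1<\infty,
\]
the weight $w\equiv 1$ belongs to $A_p(\mathbb{N})$, and \eqref{eq:main-estimate} reduces exactly to \eqref{eq:main-estimate-nopesos} with a constant $C$ that, by the theorem, is independent of both $f$ and $[a,b]$. This settles the ``if'' part.

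For the necessity I would argue by contradiction, and it suffices to exhibit a \emph{single} interval for which $T_{[a,b]}$ fails to be bounded on $\ell^1(\mathbb{N})$, since this precludes the uniform estimate \eqref{eq:main-estimate-nopesos} at $p=1$ a fortiori. Recall that the $\ell^1(\mathbb{N})\to\ell^1(\mathbb{N})$ norm of an operator with kernel $K_{[a,b]}(m,n)$ equals $\sup_m\sum_{n}|K_{[a,b]}(m,n)|$, so it is enough to test on the sequence $\delta_0$. Fixing $b=1$ and $a\in(-1,1)$, and using $p_0^{(\alpha,\beta)}=w_0^{(\alpha,\beta)}$, I would write
\[
\|T_{[a,1]}\delta_0\|_{\ell^1(\mathbb{N})}=\sum_{n=0}^{\infty}|K_{[a,1]}(0,n)|=w_0^{(\alpha,\beta)}\sum_{n=0}^{\infty}|I_n|,\qquad I_n=\int_a^1 p_n^{(\alpha,\beta)}(x)\,d\mu_{\alpha,\beta}(x),
\]
so that the goal becomes to prove $\sum_n|I_n|=\infty$.

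To evaluate $I_n$ I would use the primitive identity
\[
\frac{d}{dx}\Big[(1-x)^{\alpha+1}(1+x)^{\beta+1}P_{n-1}^{(\alpha+1,\beta+1)}(x)\Big]=-2n\,(1-x)^{\alpha}(1+x)^{\beta}P_n^{(\alpha,\beta)}(x),
\]
which can be checked directly and follows from the differentiation formulas in \cite{Szego}. Integrating on $[a,1]$, the boundary term at $x=1$ vanishes because $\alpha+1>0$, leaving
\[
I_n=\frac{w_n^{(\alpha,\beta)}}{2n}(1-a)^{\alpha+1}(1+a)^{\beta+1}P_{n-1}^{(\alpha+1,\beta+1)}(a),\qquad n\ge 1.
\]
Now I would insert $w_n^{(\alpha,\beta)}\sim c\,n^{1/2}$ together with the classical bulk asymptotics for $P_{n-1}^{(\alpha+1,\beta+1)}(\cos\theta)$ from \cite{Szego}, which give $P_{n-1}^{(\alpha+1,\beta+1)}(\cos\theta_a)\sim c\,n^{-1/2}\cos(n\theta_a+\phi)$ with $a=\cos\theta_a$, $\theta_a\in(0,\pi)$, and a nonzero amplitude. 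Hence $|I_n|\ge \frac{c}{n}|\cos(n\theta_a+\phi)|$ for large $n$, and since $\theta_a\in(0,\pi)$ the bound $|\cos t|\ge\cos^2 t=\tfrac12(1+\cos 2t)$ together with the convergence of $\sum_n n^{-1}\cos(2n\theta_a+2\phi)$ (Dirichlet's test) forces $\sum_n|I_n|=\infty$. Thus $T_{[a,1]}\delta_0\notin\ell^1(\mathbb{N})$, $T_{[a,1]}$ is unbounded on $\ell^1(\mathbb{N})$, and \eqref{eq:main-estimate-nopesos} cannot hold at $p=1$.

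The sufficiency being routine, the only real obstacle is the necessity, and within it the main technical point is securing a genuine, non-summable lower bound for $|I_n|$: one must balance the normalization $w_n^{(\alpha,\beta)}\sim c\,n^{1/2}$ against the $n^{-1/2}$ decay of $P_{n-1}^{(\alpha+1,\beta+1)}$ and the $n^{-1}$ factor coming from the primitive, and one must control the error terms in the asymptotics uniformly on a neighbourhood of the fixed point $a$ so that the oscillatory main term $n^{-1}|\cos(n\theta_a+\phi)|$ indeed dominates. Once this oscillatory behaviour with positive mean is in place, the divergence of the resulting series is classical.
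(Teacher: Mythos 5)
Your proof is correct, and while the sufficiency part coincides with the paper's (apply Theorem~\ref{thm:main} with $w\equiv 1$), your necessity argument takes a genuinely different route. The paper does not fix an interval: it tests $T_{[0,1-1/m^2]}$ on $\delta_m$ and shows, via Lemma~\ref{lem:bounds}, that $\sum_{n=m+1}^{2m}|K_{[0,1-1/m^2]}(m,n)|\simeq\log m$, so the uniform constant blows up along a family of intervals; the proof of \eqref{eq:asym} rests on the Darboux-type expansion \eqref{eq:Darboux} and a somewhat delicate analysis of the oscillatory integrals $J_1,J_2,J_3$ following Askey. You instead fix a single interval $[a,1]$ with $a$ interior, test on $\delta_0$, and compute the column $K_{[a,1]}(0,n)$ exactly via the antiderivative identity $\frac{d}{dx}\bigl[(1-x)^{\alpha+1}(1+x)^{\beta+1}P_{n-1}^{(\alpha+1,\beta+1)}(x)\bigr]=-2n(1-x)^{\alpha}(1+x)^{\beta}P_n^{(\alpha,\beta)}(x)$, reducing everything to pointwise Darboux asymptotics at the single point $a=\cos\theta_a$ and the classical $|\cos t|\ge\tfrac12(1+\cos 2t)$ trick. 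The balance of exponents ($w_n\sim cn^{1/2}$ against $n^{-1}\cdot n^{-1/2}$) and the Dirichlet-test step are both sound, the $O(n^{-3/2})$ error in the Darboux formula contributes a summable $O(n^{-2})$ correction, and $2\theta_a\notin 2\pi\mathbb{Z}$ guarantees convergence of the oscillatory part, so the harmonic divergence survives. Your version is more elementary (no products of asymptotic expansions to integrate) and proves the strictly stronger statement that each individual $T_{[a,1]}$ is unbounded on $\ell^1(\mathbb{N})$ — which is exactly the fact the paper must separately extract in Remark~\ref{rem} to run the uniform boundedness principle in the proof of Theorem~\ref{thm:conv}; applied to $[-r,r]$ your argument would cover that too. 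What you lose relative to the paper is only the quantitative $\log m$ growth rate of the uniform constant.
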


Finally, from Theorem \ref{thm:main-nopesos}, we deduce that
\begin{thm}
\label{thm:conv}
Let $\alpha,\beta \ge -1/2$ and $1\le p<\infty$. Then \eqref{eq:conv} holds if and only if $1<p<\infty$.
\end{thm}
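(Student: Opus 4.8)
My plan is to derive Theorem~\ref{thm:conv} from Theorem~\ref{thm:main-nopesos} through the standard principle that norm convergence of a uniformly bounded family of operators is equivalent to its convergence on a dense subspace, together with the Banach--Steinhaus theorem for the converse. Recall that $\mathcal{T}_r=T_{[-r,r]}$, so that Theorem~\ref{thm:main-nopesos} says precisely that $\sup_{0<r<1}\|\mathcal{T}_r\|_{\ell^p(\mathbb{N})\to\ell^p(\mathbb{N})}<\infty$ exactly when $1<p<\infty$.

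For the sufficiency I would first prove \eqref{eq:conv} on the dense subspace of finitely supported sequences and then propagate it using the uniform bound. If $g$ has finite support, then $P:=\mathcal{F}_{\alpha,\beta}g$ is a polynomial, whence $P\in L^2([-1,1],d\mu_{\alpha,\beta})$, and since $c_n^{(\alpha,\beta)}(P)=g(n)$ one has $(\mathcal{T}_r g-g)(n)=c_n^{(\alpha,\beta)}((\chi_{[-r,r]}-1)P)$. By the $L^2$--$\ell^2$ isometry, $\|\mathcal{T}_r g-g\|_{\ell^2(\mathbb{N})}^2=\int_{\{r<|x|\le1\}}|P(x)|^2\,d\mu_{\alpha,\beta}(x)$, which tends to $0$ as $r\to1^-$ by dominated convergence. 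This already handles $p=2$; for $p\ge2$ the contraction $\ell^2(\mathbb{N})\hookrightarrow\ell^p(\mathbb{N})$ gives $\|\mathcal{T}_r g-g\|_{\ell^p}\le\|\mathcal{T}_r g-g\|_{\ell^2}\to0$ at once. For $1<p<2$ I would fix $p_1\in(1,p)$ and use Theorem~\ref{thm:main-nopesos} to bound $\|\mathcal{T}_r g-g\|_{\ell^{p_1}}\le(C+1)\|g\|_{\ell^{p_1}}$ uniformly in $r$, so that the interpolation inequality with $1/p=\theta/p_1+(1-\theta)/2$ again yields $\|\mathcal{T}_r g-g\|_{\ell^p}\to0$. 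Finally, for arbitrary $f\in\ell^p(\mathbb{N})$ and $\varepsilon>0$, choosing a finitely supported $g$ with $\|f-g\|_{\ell^p}<\varepsilon$ and decomposing $\mathcal{T}_r f-f=\mathcal{T}_r(f-g)+(\mathcal{T}_r g-g)+(g-f)$ gives $\limsup_{r\to1^-}\|\mathcal{T}_r f-f\|_{\ell^p}\le(C+1)\varepsilon$, and $\varepsilon\to0$ finishes \eqref{eq:conv}.

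For the necessity I would argue by contradiction. Assume \eqref{eq:conv} holds for $p=1$; then each $\mathcal{T}_r$ maps $\ell^1(\mathbb{N})$ into itself (and is bounded there for fixed $r<1$), and for every $f\in\ell^1(\mathbb{N})$ the quantities $\|\mathcal{T}_r f\|_{\ell^1}$ stay bounded as $r\to1^-$. The uniform boundedness principle then forces $\sup_{0<r<1}\|\mathcal{T}_r\|_{\ell^1(\mathbb{N})\to\ell^1(\mathbb{N})}<\infty$, contradicting the necessity half of Theorem~\ref{thm:main-nopesos}, according to which \eqref{eq:main-estimate-nopesos} must fail at $p=1$.

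The delicate point, which I expect to be the main obstacle, is exactly this last contradiction: Theorem~\ref{thm:main-nopesos} only asserts that the supremum of $\|T_{[a,b]}\|_{\ell^1\to\ell^1}$ over all subintervals is infinite, whereas Banach--Steinhaus requires the blow-up to be carried by the symmetric family, that is, $\|\mathcal{T}_r\|_{\ell^1(\mathbb{N})\to\ell^1(\mathbb{N})}\to\infty$ as $r\to1^-$. I would therefore need to check that the sequences and intervals witnessing the failure of \eqref{eq:main-estimate-nopesos} at $p=1$ can be taken symmetric about the origin, or equivalently to extract from the kernel lower bounds that the symmetric truncations already have divergent $\ell^1$ operator norms; this is where the real content of the converse lies, the remainder being the routine density-and-uniform-boundedness machinery.
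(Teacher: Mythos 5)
Your overall skeleton coincides with the paper's: sufficiency by convergence on a dense subspace plus the uniform bound of Theorem \ref{thm:main-nopesos}, necessity by Banach--Steinhaus. On the dense subspace your argument is genuinely different from, and somewhat slicker than, the paper's Lemma \ref{lem:conv-c00}: the paper writes $\mathcal{T}_rf_m-f_m$ for Kronecker deltas in terms of the kernels $K_{\pm r}(m,\cdot)$ and invokes the bound $|K_b(m,n)|\le C|m-n|^{-1}$ of Lemma \ref{lem:bounds} together with dominated convergence, whereas you use the $L^2$--$\ell^2$ isometry to get the $\ell^2$ case, the contraction $\ell^2\hookrightarrow\ell^p$ for $p\ge 2$, and log-convexity of the $\ell^p$ norms combined with the uniform $\ell^{p_1}$ bound for $1<p<2$. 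All of these steps are correct, and the final approximation argument is identical to the paper's.

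The necessity half, however, has a genuine gap, and it is exactly the one you flag in your last paragraph. The counterexample proving the ``only if'' part of Theorem \ref{thm:main-nopesos} lives on the one-sided intervals $[0,1-1/m^2]$, via \eqref{eq:asym}; Banach--Steinhaus applied to \eqref{eq:conv} at $p=1$ only contradicts uniform $\ell^1$-boundedness of the \emph{symmetric} family $\{\mathcal{T}_r\}_{0<r<1}$, so the blow-up must be transferred to intervals $[-r,r]$. You state that this needs to be checked but do not carry it out, so as written the converse does not follow from Theorem \ref{thm:main-nopesos} alone. The paper closes precisely this gap in Remark \ref{rem}: using the reflection identity $p_n^{(\alpha,\beta)}(-z)=(-1)^{n}p_n^{(\beta,\alpha)}(z)$ and repeating the argument of Lemmas \ref{lem:kernel} and \ref{lem:bounds}, it shows that
\[
\int_{-1+1/m^2}^{1-1/m^2}p_m^{(\alpha,\beta)}(x)p_n^{(\alpha,\beta)}(x)\,d\mu_{\alpha,\beta}(x)
\]
has the same asymptotic expansion \eqref{eq:asym-2}, with leading term $A/(n-m)$, so that $\sum_{n=m+1}^{2m}|K_{[-r_m,r_m]}(m,n)|\gtrsim\log m$ for $r_m=1-1/m^2$ and hence $\|\mathcal{T}_{r_m}\delta_m\|_{\ell^1(\mathbb{N})}\to\infty$ while $\|\delta_m\|_{\ell^1(\mathbb{N})}=1$. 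Some such symmetric lower bound is indispensable for your contradiction to go through.
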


Of course, from Theorem \ref{thm:conv}, the pointwise convergence
\[
\lim_{r\to 1^{-}}\mathcal{T}_rf(n)=f(n), \qquad n\in\mathbb{N},
\]
follows immediately.

The paper is organised as follows. Section \ref{sec:main} contains the proof of Theorem \ref{thm:main}. To prove it we obtain a proper expression for the kernel of $T_{[a,b]}$ to write it in terms of some classical operators. The mapping properties of such operators will be used to complete the result. The proofs of Theorem \ref{thm:main-nopesos} and Theorem \ref{thm:conv} are contained in Section \ref{sec:proofs} where some technical lemmas are also included.
%%%%%%%%%%%%%%%%%%%%%%%%%%%%%%%%%%%%%%%%%%%%%%%%%%%%%%%%%%%%%%%%
\section{Proof of Theorem \ref{thm:main}}
\label{sec:main}
%%%%%%%%%%%%%%%%%%%%%%%%%%%%%%%%%%%%%%%%%%%%%%%%%%%%%%%%%%%%%%%%
From the identity
\[
\chi_{[a,b]}(x)=\chi_{[-1,b]}(x)-\chi_{[-1,a)}(x),
\]
we can focus on analysing the operator $T_{[-1,b]}$, denoted by $T_b$. For sequences $f\in\ell^2(\mathbb{N})\cap \ell^p(\mathbb{N},w)$, by using the identity
\[
\int_{-1}^{1}\mathcal{F}_{\alpha,\beta}g(x)\mathcal{F}_{\alpha,\beta}h(x)\, d\mu_{\alpha,\beta}(x)=\sum_{m=0}^{\infty}g(m)h(m),\qquad g,h\in \ell^2(\mathbb{N}),
\]
we have
\begin{equation}
\label{eq:Tb}
T_{b}f(n)=\sum_{m=0}^{\infty}f(m)K_b(m,n),
\end{equation}
where
\[
K_b(m,n)=\int_{-1}^{b}p_m^{(\alpha,\beta)}(x)p_n^{(\alpha,\beta)}(x)\, d\mu_{\alpha,\beta}(x).
\]
Our first step to prove Theorem \ref{thm:main} is to obtain an explicit expression for the kernel~$K_b$.

\begin{lem}
\label{lem:kernel}
Let $\alpha,\beta>-1$. Then, for $n\not=m$ we have the identity
\begin{equation*}
%\label{eq:kernel}
K_b(m,n)=\frac{(1-b)^{\alpha+1}(1+b)^{\beta+1}}{\lambda_n^{(\alpha,\beta)}-\lambda_m^{(\alpha,\beta)}}
\Big(p_n^{(\alpha,\beta)}(b)\big(p_m^{(\alpha,\beta)}\big)'(b)-\big(p_n^{(\alpha,\beta)}\big)'(b)p_m^{(\alpha,\beta)}(b)\Big),
\end{equation*}
where $\lambda_j^{(\alpha,\beta)}=j(j+\alpha+\beta+1)$.
\end{lem}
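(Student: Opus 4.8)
The plan is to exploit the fact that the normalised Jacobi polynomials $p_n^{(\alpha,\beta)}$ are eigenfunctions of a Sturm--Liouville operator, and to integrate the resulting bilinear (Wronskian-type) identity over $[-1,b]$. The starting point is the Jacobi differential equation written in self-adjoint form,
\[
\frac{d}{dx}\left[(1-x)^{\alpha+1}(1+x)^{\beta+1}\big(p_n^{(\alpha,\beta)}\big)'(x)\right] = -\lambda_n^{(\alpha,\beta)}(1-x)^\alpha(1+x)^\beta\, p_n^{(\alpha,\beta)}(x),
\]
with $\lambda_n^{(\alpha,\beta)}=n(n+\alpha+\beta+1)$. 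This is obtained from the classical Jacobi ODE by multiplying through by the weight $(1-x)^\alpha(1+x)^\beta$ and recognising the first two terms as a single derivative; since $p_n^{(\alpha,\beta)}$ is just a constant multiple of $P_n^{(\alpha,\beta)}$, the identity holds verbatim for the normalised polynomials.

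Next I would write this equation for the indices $n$ and $m$, multiply the $n$-th equation by $p_m^{(\alpha,\beta)}$ and the $m$-th by $p_n^{(\alpha,\beta)}$, and subtract. A direct computation shows that the left-hand side is an exact derivative, namely
\[
\frac{d}{dx}\left[(1-x)^{\alpha+1}(1+x)^{\beta+1}\Big(p_m^{(\alpha,\beta)}(x)\big(p_n^{(\alpha,\beta)}\big)'(x)-p_n^{(\alpha,\beta)}(x)\big(p_m^{(\alpha,\beta)}\big)'(x)\Big)\right],
\]
whereas the right-hand side equals $-(\lambda_n^{(\alpha,\beta)}-\lambda_m^{(\alpha,\beta)})(1-x)^\alpha(1+x)^\beta\, p_n^{(\alpha,\beta)}(x)p_m^{(\alpha,\beta)}(x)$. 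Dividing by $-(\lambda_n^{(\alpha,\beta)}-\lambda_m^{(\alpha,\beta)})$, which is permissible precisely because $n\neq m$, and integrating in $x$ from $-1$ to $b$, the left-hand side collapses by the fundamental theorem of calculus while the right-hand side reproduces $K_b(m,n)$.

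It then remains to evaluate the boundary term $\big[(1-x)^{\alpha+1}(1+x)^{\beta+1}(\cdots)\big]_{-1}^{b}$. At the lower endpoint the factor $(1+x)^{\beta+1}$ vanishes because $\beta+1>0$, while the polynomial factors stay bounded, so the contribution at $x=-1$ is zero and only the value at $x=b$ survives; after reordering the two terms of the Wronskian to absorb the sign, this gives exactly the claimed expression. The argument is entirely elementary, and the one place that requires a small check is the vanishing of the boundary term at $-1$, which is where the hypothesis $\alpha,\beta>-1$ (concretely $\beta>-1$) enters.
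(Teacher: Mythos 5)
Your proposal is correct and follows essentially the same route as the paper: the paper also uses the eigenfunction equation $L^{\alpha,\beta}p_n^{(\alpha,\beta)}=\lambda_n^{(\alpha,\beta)}p_n^{(\alpha,\beta)}$ together with the Green/Lagrange identity on $[-1,b]$, whose boundary term is exactly your Wronskian expression $(1-x)^{\alpha+1}(1+x)^{\beta+1}\bigl(p_m^{(\alpha,\beta)}(p_n^{(\alpha,\beta)})'-p_n^{(\alpha,\beta)}(p_m^{(\alpha,\beta)})'\bigr)$, vanishing at $x=-1$ since $\beta+1>0$. The only cosmetic difference is that you derive the identity from the self-adjoint form of the ODE rather than quoting it for the operator $L^{\alpha,\beta}$.
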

\begin{proof}
First, we note that (see \cite[p. 60, eq. (4.2.1)]{Szego})
\[
L^{\alpha,\beta}p_n^{(\alpha,\beta)}(x)=\lambda_n^{(\alpha,\beta)}p_n^{(\alpha,\beta)}(x),
\]
with
\[
L^{\alpha,\beta}=-(1-x^2)\frac{d^2}{dx^2}-(\beta-\alpha-(\alpha+\beta+2)x)\frac{d}{dx}.
\]
It is well known that $L^{\alpha,\beta}$ is a symmetric operator in $L^2([-1,1],d\mu_{\alpha,\beta})$, but for every interval $[r,s]\subset [-1,1]$, $r<s$, it is verified that
\begin{equation*}
%\label{eq:L-parts}
\int_{r}^{s}f(x)L^{\alpha,\beta}g(x)\, d\mu_{\alpha,\beta}(x)=U_{\alpha,\beta}(f,g)(x)\Big|_{x=r}^{x=s}+\int_{r}^{s}g(x)L^{\alpha,\beta}f(x)\, d\mu_{\alpha,\beta}(x),
\end{equation*}
with
\[
U_{\alpha,\beta}(f,g)(x)=(1-x)^{\alpha+1}(1+x)^{\beta+1}\Big(g(x)\frac{df}{dx}(x)-f(x)\frac{dg}{dx}(x)\Big).
\]

Then,
\begin{multline*}
\lambda_n^{(\alpha,\beta)}K_b(m,n)=\int_{-1}^{b}p_m^{(\alpha,\beta)}(x)L^{\alpha,\beta}p_n^{(\alpha,\beta)}(x)\, d\mu_{\alpha,\beta}(x)\\
\begin{aligned}
&=U_{\alpha,\beta}(p_m^{(\alpha,\beta)},p_n^{(\alpha,\beta)})(x)\Big|_{x=-1}^{x=b}
+\int_{-1}^{b}L^{\alpha,\beta}p_m^{(\alpha,\beta)}(x)p_n^{(\alpha,\beta)}(x)\, d\mu_{\alpha,\beta}(x)\\&=
U_{\alpha,\beta}(p_m^{(\alpha,\beta)},p_n^{(\alpha,\beta)})(x)\Big|_{x=-1}^{x=b}+\lambda_m^{(\alpha,\beta)}K_b(m,n).
\end{aligned}
\end{multline*}
and
\[
K_{b}(m,n)=\frac{1}{\lambda_n^{(\alpha,\beta)}-\lambda_m^{(\alpha,\beta)}}
\left(U_{\alpha,\beta}(p_m^{(\alpha,\beta)},p_n^{(\alpha,\beta)})(x)\Big|_{x=-1}^{x=b}\right).
\]
Now the result follows immediately.
\end{proof}

The proof of Theorem \ref{thm:main} will be obtained by using the mapping properties of some classical operators. We consider
\[
Hf(n)=\sum_{\begin{smallmatrix}
              m=0 \\
              m\not=n
            \end{smallmatrix}}^{\infty}\frac{f(m)}{n-m}\qquad\text{ and }\qquad
Q_af(n)=\sum_{\begin{smallmatrix}
              m=0 \\
              m\not=n
            \end{smallmatrix}}^{\infty}\frac{f(m)}{n+m+a},
\]
for some non-negative constant $a$. In the definition of $Q_a$ we have considered $m\not=n$ because it is more convenient for us, but that value can be included without any problem.

The operator $H$ is the well known discrete Hilbert transform and its boundedness with weights was treated in \cite[Theorem 10]{HMW}. There, it was proved that
\begin{equation}
\label{eq:acot-P}
\|Hf\|_{\ell^p(\mathbb{N},w)}\le C \|f\|_{\ell^p(\mathbb{N},w)}\Longleftrightarrow w\in A_p(\mathbb{N}),
\end{equation}
for $1<p<\infty$, and
\begin{equation}
\label{eq:acotdeb-P}
\|Hf\|_{\ell^{1,\infty}(\mathbb{N},w)}\le C \|f\|_{\ell^1(\mathbb{N},w)} \Longleftrightarrow w\in A_1(\mathbb{N}).
\end{equation}
Moreover, the constant $C$ in \eqref{eq:acot-P} and \eqref{eq:acotdeb-P} only depends on the $A_p(\mathbb{N})$ constant of the weight $w$.

In the case of the operator $Q_a$, we have
\[
|Q_af(n)|\le C\left(\frac{1}{n+1}\sum_{m=0}^{n}|f(m)|+\sum_{m=n}^{\infty}\frac{|f(m)|}{m+1}\right)=:C\left(O_1f(n)+O_2f(n)\right).
\]
The operator $O_1$ is the discrete Hardy operator and it can be controlled by the discrete maximal operator, so it is bounded from $\ell^p(\mathbb{N},w)$ into itself when $1<p<\infty$ and $w\in A_p(\mathbb{N})$, and from $\ell^1(\mathbb{N},w)$ into $\ell^{1,\infty}(\mathbb{N},w)$ for $w\in A_1(\mathbb{N})$. 
From the identity
\[
\sum_{m=0}^{\infty}f(m)O_1g(m)=\sum_{m=0}^{\infty}g(m)O_2f(m),
\]
we have that $O_2$ is the adjoint operator of $O_1$ (in fact, it is the adjoint Hardy operator), and we conclude that
\begin{equation}
\label{eq:acot-Q}
\|Q_af\|_{\ell^p(\mathbb{N},w)}\le C \|f\|_{\ell^p(\mathbb{N},w)},
\end{equation}
for $1<p<\infty$ and $w\in A_p(\mathbb{N})$. Moreover, for $O_2$, using Fubini's theorem and the definition of $A_1(\mathbb{N})$, we can deduce that it is a bounded operator from $\ell^1(\mathbb{N},w)$ into itself and, finally, we have
\begin{equation}
\label{eq:acotdeb-Q}
\|Q_af\|_{\ell^{1,\infty}(\mathbb{N},w)}\le C \|f\|_{\ell^1(\mathbb{N},w)},
\end{equation}
when $w\in A_1(\mathbb{N})$. The constant appearing in the boundedness of the discrete maximal operator also  depends on the value $[w]_{A_p(\mathbb{N})}$ and, then, so it occurs for the constant $C$ in \eqref{eq:acot-Q} and \eqref{eq:acotdeb-Q}.

\begin{proof}[Proof of Theorem \ref{thm:main}]
Set
\[
r_b(n)=(1-b)^{\alpha/2+1/4}(1+b)^{\beta/2+1/4}p_n^{(\alpha,\beta)}(b)
\]
and
\[
R_b(n)=\frac{(1-b)^{\alpha/2+3/4}(1+b)^{\beta/2+3/4}}{2n+\alpha+\beta+1}(p_n^{(\alpha,\beta)})'(b).
\]
By Lemma \ref{lem:kernel} and the identities
\begin{align}
\label{eq:identities}
\frac{1}{\lambda_n^{(\alpha,\beta)}-\lambda_m^{(\alpha,\beta)}}&=\frac{1}{2m+\alpha+\beta+1}\left(\frac{1}{n-m}-\frac{1}{n+m+\alpha+\beta+1}\right)\\
&=\frac{1}{2n+\alpha+\beta+1}\left(\frac{1}{n-m}+\frac{1}{n+m+\alpha+\beta+1}\right)
\end{align}
we have
\begin{multline}
\label{eq:decom-T}
T_bf(n)=r_b(n)H(R_bf)(n)-R_b(n)H(r_bf)(n)-r_b(n)Q_{\alpha+\beta+1}(R_bf)(n)\\-R_b(n)Q_{\alpha+\beta+1}(r_bf)(n)+
f(n)K_b(n,n).
\end{multline}

To estimate the weights $r_b$ and $R_b$ we need some bounds for the Jacobi polynomials. For $a,b>-1$, the estimate (see \cite[eq.~(2.6) and (2.7)]{Muckenhoupt})
\begin{multline}
\label{eq:unif-bound-trozos}
  |p_n^{(a,b)}(x)|\\\le C \begin{cases}
                          (n+1)^{a+1/2}, & 1-1/(n+1)^{2}<x<1, \\
                          (1-x)^{-a/2-1/4}(1+x)^{-b/2-1/4}, & -1+1/(n+1)^{2}\leq x\leq 1-1/(n+1)^{2},\\
                          (n+1)^{b+1/2}, & -1<x<-1+1/(n+1)^{2},
                        \end{cases}
\end{multline}
holds, where $C$ is a constant independent of $n$ and $x$. When $a,b\ge -1/2$ the previous bound can be replaced by the simpler one
\begin{equation}
\label{eq:unif-bound}
|p_n^{(a,b)}(x)|\le C (1-x)^{-a/2-1/4}(1+x)^{-b/2-1/4}.
\end{equation}
In this way, using the identity (see \cite[eq. 18.9.15]{NIST})
\begin{equation*}
%\label{eq:der}
\frac{d P_n^{(a,b)}}{dx}(x)=\frac{n+a+b+1}{2}P_{n-1}^{(a+1,b+1)}(x)
\end{equation*}
and \eqref{eq:unif-bound},
we obtain the bounds
\begin{equation}
\label{eq:bound-weights}
|r_b(n)|\le C \qquad \text{ and }\qquad |R_b(n)|\le C.
\end{equation}

Then, by \eqref{eq:decom-T}, \eqref{eq:bound-weights}, \eqref{eq:acot-P}, \eqref{eq:acot-Q} and the estimate $K_b(n,n)\le 1$, we deduce that
\begin{multline*}
\|T_bf\|_{\ell^p(\mathbb{N},w)}\le C\left(\|H(R_bf)\|_{\ell^p(\mathbb{N},w)}+\|H(r_bf)\|_{\ell^p(\mathbb{N},w)}+\|Q_{\alpha+\beta+1}(R_bf)\|_{\ell^p(\mathbb{N},w)}
\right.\\ \left.
+\|Q_{\alpha+\beta+1}(r_bf)\|_{\ell^p(\mathbb{N},w)}+\|f\|_{\ell^p(\mathbb{N},w)}\right)\le C \|f\|_{\ell^p(\mathbb{N},w)}
\end{multline*}
and the proof of \eqref{eq:main-estimate} is completed when $f\in \ell^2(\mathbb{N})\cap\ell^p(\mathbb{N},w)$. To prove \eqref{eq:main-estimate-1} we proceed in the same way but using \eqref{eq:acotdeb-P} and \eqref{eq:acotdeb-Q} instead of \eqref{eq:acot-P} and \eqref{eq:acot-Q}.

At this point, we know that the operator $T_b$, which is given by \eqref{eq:Tb} for sequences $f\in \ell^2(\mathbb{N})\cap\ell^p(\mathbb{N},w)$, admits an extension, that we denote by $\mathbb{T}_b$, bounded from $\ell^p(\mathbb{N},w)$ into itself when $1<p<\infty$, and from $\ell^1(\mathbb{N},w)$ into $\ell^{1,\infty}(\mathbb{N},w)$. Let us see that
\[
 \mathbb{T}_bf(n)=\sum_{m=0}^{\infty}f(m)K_b^{(\alpha,\beta)}(m,n), \qquad f\in \ell^p(\mathbb{N},w),
\]
to complete the proof of our result. We provide the details for $1<p<\infty$ and we omit them for $p=1$ (see \cite{Bet-et-al}).

First, let us consider the functional
\begin{align*}
\mathfrak{T}_{b,n}: \ell^p(\mathbb{N},w)&\longrightarrow \mathbb{R}\\
f&\longmapsto \mathfrak{T}_{b,n}f:=\sum_{m=0}^{\infty}f(m)K_b^{(\alpha,\beta)}(m,n).
\end{align*}
For $1<p<\infty$, it is easy to check that
\begin{equation}
\label{eq:bound-Tbn}
|\mathfrak{T}_{b,n}f|\le C \frac{\|f\|_{\ell^p(\mathbb{N},w)}}{w^{1/p}(n)},
\end{equation}
for sequences $f\in \ell^2(\mathbb{N})\cap \ell^p(\mathbb{N},w)$. Then we can prove that $K_b(\cdot,n)$ is a sequence in $\ell^q(\mathbb{N},w^{-1/(p-1)})$, where $q$ is the conjugate exponent of $p$; i.e., $p^{-1}+q^{-1}=1$. 
Then, the operator $\mathfrak{T}_{b,n}^{(\alpha,\beta)}$ is bounded and it verifies \eqref{eq:bound-Tbn} for $f\in \ell^p(\mathbb{N},w)$ with $1< p<\infty$.

Now, given $f\in \ell^p(\mathbb{N},w)$ and  $\{f_k\}_{k\ge 0}\subset \ell^2(\mathbb{N})\cap \ell^p(\mathbb{N},w)$ such that $f_k\longrightarrow f$ in $\ell^p(\mathbb{N},w)$, we have
\[
\mathbb{T}_b f_k=T_bf_k \longrightarrow \mathbb{T}_bf, \qquad \text{ in } \ell^p(\mathbb{N},w)
\]
and
\[
\mathcal{T}_bf_k(n)=\mathfrak{T}_{b,n}f_k \longrightarrow \mathbb{T}_bf(n), \qquad \text{ in } \mathbb{R}.
\]
In this way, by the boundedness of $\mathfrak{T}_{b,n}$, we conclude that
\[
\mathbb{T}_bf(n)=\mathfrak{T}_{b,n}f=\sum_{m=0}^{\infty}f(m)K_b^{(\alpha,\beta)}(m,n), \qquad n\in \mathbb{N},
\]
finishing the proof
\end{proof}

\begin{rem}
For the complete range $\alpha,\beta>-1$, it is also possible to obtain \eqref{eq:main-estimate} and \eqref{eq:main-estimate-1} for $T_b$ but with more involved conditions on the weight $w$ than the simple one $w\in A_p(\mathbb{N})$. Indeed, from \eqref{eq:unif-bound-trozos} it is clear that, for $a,b>-1$,
\[
|p_n^{(a,b)}(x)|\le C \left(1-x+\frac{1}{(n+1)^2}\right)^{-a/2-1/4}\left(1+x+\frac{1}{(n+1)^2}\right)^{-b/2-1/4}.
\]
Then, taking
\[
M^{r,s}_{b}(n)=\left(\frac{1-b}{1-b+\frac{1}{(n+1)^2}}\right)^{r}\left(\frac{1+b}{1+b+\frac{1}{(n+1)^2}}\right)^{s},
\]
we have
\[
|r_b(n)|\le C M_b^{\alpha/2+1/4,\beta/2+1/4}(n) \qquad\text{ and }\qquad
|R_b(n)|\le C M_b^{\alpha/2+3/4,\beta/2+3/4}(n).
\]
In this way, provided that 
\[
w(n)M_b^{p(\alpha/2+1/4),p(\beta/2+1/4)}(n) 
\qquad\text{ and }\qquad
w(n)M_b^{p(\alpha/2+3/4),p(\beta/2+3/4)}(n)
\]
are uniform weights in $A_p(\mathbb{N})$ (uniform in the sense that the $A_p(\mathbb{N})$ constant of such weights does not depend on $b$) and using that 
\[
M_b^{\alpha/2+1/4,\beta/2+1/4}(n)M_b^{\alpha/2+3/4,\beta/2+3/4}(n)=M_b^{\alpha+1,\beta+1}(n)\le C, 
\]
it is possible to prove \eqref{eq:main-estimate} and \eqref{eq:main-estimate-1}. This fact is so because the constants in the boundedness of the discrete Hilbert transform and the discrete maximal function in $\ell^p(\mathbb{N},w)$ only depend on the $A_p(\mathbb{N})$ constant of the weight $w$.
\end{rem}

%%%%%%%%%%%%%%%%%%%%%%%%%%%%%%%%%%%%%%%%%%%%%%%%%%%%%%%%%%%%%%%
\section{Proofs of Theorem \ref{thm:main-nopesos} and Theorem \ref{thm:conv}}
\label{sec:proofs}
%%%%%%%%%%%%%%%%%%%%%%%%%%%%%%%%%%%%%%%%%%%%%%%%%%%%%%%%%%%%%%%
The main tool to prove Theorem \ref{thm:main-nopesos} and Theorem \ref{thm:conv} is the following lemma in which we analyse if $\{K_{b}(m,n)\}_{n\ge 0}$ is an element of $\ell^p(\mathbb{N})$.

\begin{lem}
\label{lem:bounds}
  Let $\alpha,\beta \ge -1/2$ and $m\in \mathbb{N}$. Then
  \begin{equation}
  \label{eq:bound-K}
  |K_b(m,n)|\le C |m-n|^{-1}, \qquad n\not=m,
  \end{equation}
and $K_b(m,\cdot)\in \ell^p(\mathbb{N})$ for $1<p<\infty$. Moreover,
  \begin{equation}
  \label{eq:asym}
  \sum_{n=m+1}^{2m}\left|\int_{0}^{1-1/m^2}p_m^{(\alpha,\beta)}(x)p_n^{(\alpha,\beta)}(x)\, d\mu_{\alpha,\beta}(x)\right|\simeq \log m.
  \end{equation}
\end{lem}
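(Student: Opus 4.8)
The plan is to treat the three assertions separately, building on the algebra already developed for Theorem~\ref{thm:main}. For the pointwise bound \eqref{eq:bound-K} I would rewrite the kernel of Lemma~\ref{lem:kernel} in terms of the quantities $r_b$ and $R_b$. Since a direct computation gives $(1-b)^{\alpha+1}(1+b)^{\beta+1}p_n^{(\alpha,\beta)}(b)(p_m^{(\alpha,\beta)})'(b)=(2m+\alpha+\beta+1)r_b(n)R_b(m)$, and symmetrically with $m,n$ interchanged, Lemma~\ref{lem:kernel} becomes
\[
K_b(m,n)=\frac{(2m+\alpha+\beta+1)r_b(n)R_b(m)-(2n+\alpha+\beta+1)R_b(n)r_b(m)}{\lambda_n^{(\alpha,\beta)}-\lambda_m^{(\alpha,\beta)}}.
\]
Distributing the two summands by means of the two identities in \eqref{eq:identities} yields
\[
K_b(m,n)=r_b(n)R_b(m)\Bigl(\tfrac{1}{n-m}-\tfrac{1}{n+m+\alpha+\beta+1}\Bigr)-R_b(n)r_b(m)\Bigl(\tfrac{1}{n-m}+\tfrac{1}{n+m+\alpha+\beta+1}\Bigr).
\]
Because $|r_b|,|R_b|\le C$ uniformly in $b$ by \eqref{eq:bound-weights}, and $n+m+\alpha+\beta+1\ge n+m\ge|n-m|$ when $\alpha,\beta\ge-1/2$, each of the four terms is $\le C|n-m|^{-1}$, which gives \eqref{eq:bound-K} with a constant independent of $b$. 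The membership $K_b(m,\cdot)\in\ell^p(\mathbb{N})$ for $1<p<\infty$ is then immediate: for fixed $m$ one has $\sum_{n\ne m}|K_b(m,n)|^p\le C^p\sum_{n\ne m}|n-m|^{-p}<\infty$, while the diagonal term satisfies $|K_b(m,m)|\le\int_{-1}^1|p_m^{(\alpha,\beta)}|^2\,d\mu_{\alpha,\beta}=1$ by orthonormality.

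The logarithmic estimate \eqref{eq:asym} is the substantial part. Writing $x=\cos\theta$ and $\theta_m=\arccos(1-1/m^2)\simeq\sqrt2/m$, the integral runs over $\theta\in[\theta_m,\pi/2]$. I would insert the oscillatory asymptotic for Jacobi polynomials (see \cite{Szego}): for $\theta\ge c\,n^{-1}$ one has $p_n^{(\alpha,\beta)}(\cos\theta)=A(\theta)\cos(N_n\theta+\gamma)+E_n(\theta)$, with $N_n=n+(\alpha+\beta+1)/2$, $A(\theta)=c_{\alpha,\beta}(\sin\tfrac\theta2)^{-\alpha-1/2}(\cos\tfrac\theta2)^{-\beta-1/2}$ and $|E_n(\theta)|\le CA(\theta)(n\sin\theta)^{-1}$; for $m+1\le n\le 2m$ this is valid on all of $[\theta_m,\pi/2]$ once $c<\sqrt2$. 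The decisive feature is that the weight of $d\mu_{\alpha,\beta}$ cancels the factor $A(\theta)^2$ exactly, so that $A(\theta)^2\,d\mu_{\alpha,\beta}(\cos\theta)$ is a constant multiple of $d\theta$. Consequently the main part of $p_m^{(\alpha,\beta)}p_n^{(\alpha,\beta)}\,d\mu_{\alpha,\beta}$ is, after the product-to-sum formula, a constant times $[\cos((m-n)\theta)+\cos((m+n+\alpha+\beta+1)\theta+2\gamma)]\,d\theta$, and integrating on $[\theta_m,\pi/2]$ produces, with $k=n-m$,
\[
\int_0^{1-1/m^2}p_m^{(\alpha,\beta)}p_n^{(\alpha,\beta)}\,d\mu_{\alpha,\beta}=c_0\,\frac{\sin(k\pi/2)-\sin(k\theta_m)}{k}+O\!\Bigl(\tfrac1m\Bigr)+\mathcal{E}(m,n),
\]
the $O(1/m)$ coming from the high-frequency cosine and $\mathcal{E}$ from the errors $E_m,E_n$. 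Summing in $n$, the $O(1/m)$ terms contribute $O(1)$, so the task reduces to $\sum_{k=1}^m k^{-1}|\sin(k\pi/2)-\sin(k\theta_m)|\simeq\log m$, together with the control of $\mathcal{E}$.

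For the scalar sum the upper bound follows from $|\sin(k\pi/2)-\sin(k\theta_m)|\le 1+|\sin(k\theta_m)|$ and the elementary fact that $\sum_{k\le m}k^{-1}|\sin(k\theta_m)|=O(1)$ (split at $k\simeq\theta_m^{-1}$ and use $|\sin(k\theta_m)|\le\min\{1,k\theta_m\}$). The lower bound keeps only the odd $k$ with $k\le\sqrt m$, for which $\sin(k\theta_m)=o(1)$, so that each such term is $\ge c/k$ and $\sum_{k\le\sqrt m,\ k\text{ odd}}k^{-1}\simeq\log m$.

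The main obstacle is the control of $\mathcal{E}$. A straightforward estimate, using again that $A(\theta)^2\,d\mu_{\alpha,\beta}$ is a constant multiple of $d\theta$ and that $\int_{\theta_m}^{\pi/2}(\sin\theta)^{-1}\,d\theta\simeq\log m$, only yields $|\mathcal{E}(m,n)|\le C(\log m)/m$ for $m+1\le n\le 2m$; summed over these $n$ this is $O(\log m)$, precisely the size of the quantity to be estimated, so the error cannot be discarded globally. This is circumvented exactly as in the lower bound: restricting to $k\le\sqrt m$, the error sum is $O(\sqrt m\cdot(\log m)/m)=o(1)$, while the main term already accumulates a full $\simeq\log m$; for the upper bound the global $O(\log m)$ error is harmless. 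Combining the two directions gives $\sum_{n=m+1}^{2m}\bigl|\int_0^{1-1/m^2}p_m^{(\alpha,\beta)}p_n^{(\alpha,\beta)}\,d\mu_{\alpha,\beta}\bigr|\simeq\log m$, as claimed.
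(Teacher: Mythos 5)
Your proof is correct. For \eqref{eq:bound-K} and the $\ell^p$ membership it is the same argument as the paper's: the identity you derive for $K_b(m,n)$ in terms of $r_b$ and $R_b$ is precisely the off-diagonal part of the decomposition \eqref{eq:decom-T}, and the bound follows from \eqref{eq:bound-weights} exactly as in the text. For \eqref{eq:asym} your overall scheme is also the paper's (substitute $x=\cos\theta$, insert Darboux-type asymptotics, apply the product-to-sum formula, and recognise $\sum_{k\le m}k^{-1}\simeq\log m$), but you handle the non-principal contributions differently, and this is worth recording. The paper uses the two-term expansion \eqref{eq:Darboux} taken from Askey, which yields the per-term formula \eqref{eq:asym-2} with explicit secondary terms $N^{-1}\log(N/(n-m))$ and $M^{-1}\log(M/(n-m))$ and a genuine $O(M^{-1})$ remainder; since the secondary terms sum to $O(1)$ over $m<n\le 2m$, everything except the $1/(n-m)$ term can be discarded term by term. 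You use only the one-term Szeg\H{o} asymptotic with error $O(A(\theta)(n\sin\theta)^{-1})$, which after integration leaves an error of size $(\log m)/m$ per term --- too large to discard globally in the lower bound, as you correctly observe --- and you repair this by restricting the lower-bound sum to odd $k\le\sqrt m$, where the main terms already accumulate $\simeq\log m$ while the errors sum to $o(1)$. This is a slightly more elementary and self-contained route (the paper defers the evaluation of $J_1$, $J_2$, $J_3$ to Askey's papers), at the cost of losing the sharper individual asymptotics \eqref{eq:asym-2}, which the paper reuses in Remark \ref{rem}. A minor point: your main term $c_0(\sin(k\pi/2)-\sin(k\theta_m))/k$ is what the elementary antiderivative of $\cos(k\theta)$ over $[\theta_m,\pi/2]$ gives, and it vanishes for even $k$; this is harmless, since the odd $k$ alone already produce $\simeq\log m$, exactly as your lower-bound argument exploits.
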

\begin{proof}
From Lemma \ref{lem:kernel}, applying the identities in \eqref{eq:identities} and the bounds for $r_b$ and $R_b$ in \eqref{eq:bound-weights}, we have the estimate $|K_b(m,n)|\le C|m-n|^{-1}$ for $n\not=m$. This estimate it is enough to show that $K_b(m,\cdot)\in \ell^p(\mathbb{N})$ for $1<p<\infty$ (note that $K_b(m,m)\le 1$).

Denoting by $I(m,n)$ the integral appearing in \eqref{eq:asym}, to obtain the result it is enough to prove that
\begin{equation}
\label{eq:asym-2}
I(m,n)= A\left(\frac{1}{n-m}+\frac{1}{N}\log\left(\frac{N}{n-m}\right)+\frac{1}{M}\log\left(\frac{M}{n-m}\right)+O(M^{-1})\right)
\end{equation}
for $m+1\le n\le 2m$, with $A$ a positive constant, $N=n+(\alpha+\beta+1)/2$, and $M=m+(\alpha+\beta+1)/2$. To attain this, we consider the expansion (deduce from known asymptotics for Jacobi polynomials in \cite[formula (9)]{Askey-Jacobi})
\begin{multline}
\label{eq:Darboux}
2^{(\alpha+\beta+1)/2}p_n^{(\alpha,\beta)}(\cos \theta)=(\sin\theta/2)^{-(\alpha+1/2)}(\cos\theta/2)^{-(\beta+1/2)}\\\times\left(A\cos\left(N\theta-\phi_\alpha\right)
+A\frac{\sin\left(N\theta-\phi_\alpha\right)}{N \theta}+O(N^{-1})+O((N\theta)^{-2})\right),
\end{multline}
for $\delta/n<\theta\le \pi/2$, with $\delta>0$, and $\phi_{\alpha}=(2\alpha+1)\pi/4$. Then, using the change of variable $x=\cos \theta$, taking $B=\arccos(1-1/m^2)$ (observe that $B\simeq 1/m \simeq 1/n$ for $m+1\le n \le 2m$), and applying \eqref{eq:Darboux} for $p_n^{(\alpha,\beta)}$ and $p_m^{(\alpha,\beta)}$, we have
\[
I(m,n)=J_1(m,n)+J_2(m,n)+J_3(m,n)+O(M^{-1}),
\]
with
\[
J_1(m,n)=A\int_{1/m}^{\pi/2}\cos(M\theta-\phi_\alpha)\cos(N\theta-\phi_\alpha)\, d\theta,
\]
\[
J_2(m,n)=\frac{A}{N}\int_{1/m}^{\pi/2}\cos(M\theta-\phi_\alpha)\sin(N\theta-\phi_\alpha)\, \frac{d\theta}{\theta}
\]
and $J_3(m,n)=J_2(n,m)$. Following \cite{Askey-Jacobi} (see \cite{Askey-ultra} for some technical details), we obtain that
\[
J_1(m,n)=\frac{A}{n-m}+O(M^{-1}),
\]
\[
J_2(m,n)=\frac{A}{N}\log\left(\frac{N}{n-m}\right)+O(M^{-1})
\]
and the similar estimates for $J_3(m,n)$ by changing the roles of $m$ and $n$. Now, the proof of \eqref{eq:asym-2} is completed. In this way, \eqref{eq:asym} follows immediately because
\[
\sum_{n=m+1}^{2m}\left(\frac{1}{N}\log\left(\frac{N}{n-m}\right)+\frac{1}{M}\log\left(\frac{M}{n-m}\right)\right)\le C
\]
and
\[
\sum_{n=m+1}^{2m}\frac{1}{n-m}\simeq \log m.\qedhere
\]
\end{proof}

Now, let us proceed with the proofs of Theorem \ref{thm:main-nopesos} and Theorem \ref{thm:conv}.

\begin{proof}[Proof of Theorem \ref{thm:main-nopesos}]
By Theorem \ref{thm:main} (note that $w(n)=1$ is a weight in $A_p(\mathbb{N})$ for $1<p<\infty$), it is enough to show the existence of a sequence $f\in\ell^{1}(\mathbb{N})$ such that the inequality
\begin{equation}
\label{eq:estimate-1}
\|T_{[a,b]}f\|_{\ell^1(\mathbb{N})}\le C \|f\|_{\ell^1(\mathbb{N})}
\end{equation}
does not hold for some interval $[a,b]$.

In this way, we take $m\ge 1$ and consider the interval $[a,b]=[0,1-1/m^2]$ and the sequence $f_{m}(n)=\delta_{nm}$, where $\delta_{nm}$ denotes the usual Kronecker delta function. Note that
\[
\mathcal{F}_{\alpha,\beta}f_m(x)=p_m^{(\alpha,\beta)}(x).
\]
Now, if \eqref{eq:estimate-1} was be true, it would imply
\[
1=\|f_m\|_{\ell^{1}(\mathbb{N})}\geq\sum_{n=m+1}^{2m}\left|\int_{0}^{1-1/m^{2}}p_{n}^{(\alpha,\beta)}(x)p_{m}^{(\alpha,\beta)}(x)
\,d\mu_{\alpha,\beta}(x)\right|,
\]
but this inequality is not possible because the right hand side is greater than $C\log m$ by~\eqref{eq:asym}.
\end{proof}

\begin{rem}
\label{rem}
By using the identity $p_n^{(a,b)}(-z)=(-1)^{n}p_n^{(b,a)}(z)$, for $-1<z<1$, proceeding as in the proof of Lemma \ref{lem:kernel}, it is possible to prove that
\begin{multline*}
\int_{-1+1/m^2}^{1-1/m^2}p_m^{(\alpha,\beta)}(x)p_n^{(\alpha,\beta)}(x)\, d\mu_{\alpha,\beta}(x)\\=A\left(\frac{1}{n-m}+\frac{1}{N}\log\left(\frac{N}{n-m}\right)+\frac{1}{M}\log\left(\frac{M}{n-m}\right)+O(M^{-1})\right).
\end{multline*}
Then, in particular, we can deduce that the operators $\mathcal{T}_r$ are not bounded from $\ell^1(\mathbb{N})$ into itself.
\end{rem}

To prove Theorem \ref{thm:conv}, first we have to check the convergence of $\mathcal{T}_r$ for sequence in $c_{00}$, the space of sequences having a finite number of non-null terms, and this is done in the following lemma.

\begin{lem}
\label{lem:conv-c00}
Let $\alpha,\beta\ge -1/2$, $1<p<\infty$, and $f\in c_{00}$. Then
\[
\lim_{r\to 1^-}\|\mathcal{T}_rf-f\|_{\ell^p(\mathbb{N})}=0.
\]
\end{lem}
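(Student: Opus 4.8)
The plan is to reduce the convergence statement for finitely supported sequences to a single basis element $\delta_m$, and then to prove the convergence for $\delta_m$ by a direct computation. Since $c_{00}$ is a finite-dimensional vector space when restricted to any fixed support, and since $\mathcal{T}_r$ is linear, it suffices by the triangle inequality to prove the claim for $f = \delta_m$ with $m \in \mathbb{N}$ fixed; any $f \in c_{00}$ is a finite linear combination of such sequences. Thus I would first reduce to showing $\lim_{r\to 1^-}\|\mathcal{T}_r\delta_m - \delta_m\|_{\ell^p(\mathbb{N})} = 0$.

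For $f = \delta_m$, the representation from Theorem \ref{thm:main} gives $\mathcal{T}_r\delta_m(n) = K_{[-r,r]}(m,n)$, and I would write the target $\delta_m(n)$ using the completeness/orthonormality relation, namely $\delta_m(n) = \int_{-1}^1 p_m^{(\alpha,\beta)}(x)p_n^{(\alpha,\beta)}(x)\,d\mu_{\alpha,\beta}(x) = K_{[-1,1]}(m,n)$. Subtracting, the difference is
\[
\mathcal{T}_r\delta_m(n) - \delta_m(n) = -\int_{[-1,1]\setminus[-r,r]} p_m^{(\alpha,\beta)}(x)p_n^{(\alpha,\beta)}(x)\,d\mu_{\alpha,\beta}(x),
\]
an integral over the two shrinking end-intervals $(-1,-r)$ and $(r,1)$. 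The goal is then to show that the $\ell^p(\mathbb{N})$-norm in $n$ of this tail integral tends to $0$ as $r\to 1^-$.

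To control this, I would split the sum over $n$ into two regimes. For $n$ near $m$ (say $|n-m|$ bounded, or more generally on a growing but controlled range) I would use the bound \eqref{eq:bound-K}, $|K_{[a,b]}(m,n)| \le C|m-n|^{-1}$, which holds uniformly in the interval and guarantees that the $\ell^p$-tail in $n$ is summable for $p>1$; together with dominated convergence in $n$ and the fact that each fixed entry $\mathcal{T}_r\delta_m(n)-\delta_m(n)\to 0$ pointwise as $r\to 1^-$ (the tail integral over a shrinking interval vanishes for fixed $n$), this handles convergence. The main obstacle is making the passage to the limit uniform enough to conclude norm convergence rather than mere pointwise convergence: I expect the cleanest route is to exhibit a single $\ell^p(\mathbb{N})$-dominating sequence independent of $r$, for which \eqref{eq:bound-K} is the key input since its constant $C$ is independent of the interval, and then invoke the dominated convergence theorem on $\ell^p(\mathbb{N})$ (counting measure) using the pointwise vanishing of each coordinate.

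Concretely, I would verify that $g(n) := C(|m-n|^{-1}+1)\chi_{\{n\ne m\}} + \chi_{\{n=m\}}$ lies in $\ell^p(\mathbb{N})$ for $1<p<\infty$ and dominates $|\mathcal{T}_r\delta_m(n)-\delta_m(n)|$ uniformly in $r$, while for each fixed $n$ the coordinate tends to $0$; dominated convergence then yields $\|\mathcal{T}_r\delta_m-\delta_m\|_{\ell^p(\mathbb{N})}\to 0$. Finally, summing over the finitely many points of the support of a general $f\in c_{00}$ completes the proof. The delicate point throughout is ensuring the domination is genuinely $r$-independent, which is exactly why the uniformity of the constant in \eqref{eq:bound-K} (and in Theorem \ref{thm:main}) is essential.
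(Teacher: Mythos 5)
Your approach is essentially the same as the paper's: reduce to $f=\delta_m$ by linearity, write $\mathcal{T}_r\delta_m(n)-\delta_m(n)$ as the tail integral over $[-1,1]\setminus[-r,r]$ (equivalently, via orthogonality, as $K_r(m,n)-K_{-r}(m,n)$ for $n\neq m$), dominate by $C|m-n|^{-1}$ using \eqref{eq:bound-K} with its interval-independent constant, and conclude by dominated convergence on counting measure. One correction: your explicit dominating sequence $g(n)=C(|m-n|^{-1}+1)\chi_{\{n\neq m\}}+\chi_{\{n=m\}}$ is \emph{not} in $\ell^p(\mathbb{N})$, since the constant term $C\chi_{\{n\neq m\}}$ is not $p$-summable over $n$; the ``$+1$'' is both unnecessary (the kernel bound already gives $|m-n|^{-1}$ alone off the diagonal) and fatal to the domination argument as written, so it must be dropped, after which the proof goes through exactly as in the paper.
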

\begin{proof}
Since each $f\in c_{00}$ could be stated by a finite linear combination of $f_{m}(n)=\delta_{nm}$, we prove the result for the latter sequences. Then, using that
\[
f_m(n)=\int_{-1}^{1}p_m^{(\alpha,\beta)}(x)p_n^{(\alpha,\beta)}(x)\, d\mu_{\alpha,\beta}(x),
\]
we have
\begin{multline}
\label{eq:diff}
\mathcal{T}_rf_m(n)-f_m(n)
=-\int_{-1}^{-r}p_{m}^{(\alpha,\beta)}(x)p_n^{(\alpha,\beta)}(x)\, d\mu_{\alpha,\beta}(x) \\-\int_{r}^{1}p_m^{(\alpha,\beta)}(x)p_n^{(\alpha,\beta)}(x)\, d\mu_{\alpha,\beta}(x).
\end{multline}
Owing to the orthogonality of the Jacobi polynomials, for $m\not=n$, it is verified that
\[
\int_{r}^{1}p_m^{(\alpha,\beta)}(x)p_n^{(\alpha,\beta)}(x)\, d\mu_{\alpha,\beta}(x)=-\int_{-1}^{r}p_m^{(\alpha,\beta)}(x)p_n^{(\alpha,\beta)}(x)\, d\mu_{\alpha,\beta}(x)
\]
and, from \eqref{eq:diff}, we deduce that
\[
\mathcal{T}_rf_m(n)-f_m(n)=K_{r}(m,n)-K_{-r}(m,n),\qquad m\not=n.
\]
When $n=m$, applying \eqref{eq:unif-bound} in \eqref{eq:diff}, the estimate
\[
\left|\mathcal{T}_rf_m(m)-f_m(m)\right|\le C(1-r)^{1/2}
\]
is attained. Then
\begin{equation}
\label{eq:limit}
\lim_{r\to 1^-}\|\mathcal{T}_rf_m-f_m\|_{\ell^p(\mathbb{N})}^p=\lim_{r\to 1^-}\sum_{\begin{smallmatrix}
                                                                                  n=0 \\
                                                                                  n\not= m
                                                                                \end{smallmatrix}}^{\infty}\left|K_{-r}(m,n)-K_r(m,n)\right|^{p}.
\end{equation}
From \eqref{eq:bound-K}, we have $\left|K_{-r}(m,n)-K_r(m,n)\right|\le C |m-n|^{-1}$, when $m\not=n$. Then applying the dominated convergence theorem the result follows (note that $|m-n|^{-1}$, for $n\not=m$, is $p$-summable for $1<p<\infty$) from \eqref{eq:limit} because $\lim_{r\to 1^-}(K_{-r}(m,n)-K_r(m,n))=0$.
\end{proof}

\begin{proof}[Proof of Theorem \ref{thm:conv}]
To prove \eqref{eq:conv} for $1<p<\infty$ and sequences $f\in\ell^p(\mathbb{N})$, it is enough to approximate them by sequences in $c_{00}$ and use Theorem \ref{thm:main-nopesos} and Lemma \ref{lem:conv-c00}. Indeed, given $\varepsilon >0$, we consider a sequence $g\in c_{00}$ such that $\|f-g\|_{\ell^p(\mathbb{N})}< \varepsilon$, then, applying \eqref{eq:main-estimate-nopesos}, we have
\begin{align*}
\|\mathcal{T}_rf-f\|_{\ell^p(\mathbb{N})}&\le \|\mathcal{T}_rf-\mathcal{T}_rg\|_{\ell^p(\mathbb{N})}+\|\mathcal{T}_rg-g\|_{\ell^p(\mathbb{N})}+\|g-f\|_{\ell^p(\mathbb{N})}\\&\le C \|g-f\|_{\ell^p(\mathbb{N})}+\|\mathcal{T}_rg-g\|_{\ell^p(\mathbb{N})}\le C \varepsilon,
\end{align*}
where in the last step we have used Lemma \ref{lem:conv-c00}.

The convergence in $\ell^1(\mathbb{N})$ is not possible because in such case the uniform boundedness principle would imply the uniform boundedness of the $\mathcal{T}_{r}$ in $\ell^1(\mathbb{N})$ and that is impossible (see Remark \ref{rem}).
\end{proof}

%%%%%%%%%%%%%%%%%%%%%%%%%%%%%%%%%%%%%%%%%%%%%%%%%%%%%%%%%%%%%%
\section*{Acknowledgments}
%%%%%%%%%%%%%%%%%%%%%%%%%%%%%%%%%%%%%%%%%%%%%%%%%%%%%%%%%%%%%%
The authors would like to thank the referees for the careful reading of the paper. Their suggestions and comments have substantially improved the final version of~it.

%%%%%%%%%%%%%%%%%%%%%%%%%%%%%%%%%%%%%%%%%%%%%%%%%%%%%%%%%%%%%%%

%%%%%%%%%%%%%%%%%%%%%%%%%%%%%%%%%%%%%%%%%%%%%%%%%%%%%%%%%%%%%%%%%%%%
\end{document}